\newtheorem{thm}{Theorem}[section]
\theoremstyle{definition}
\newtheorem{dfn}{Definition}[section]
\theoremstyle{remark}
\theoremstyle{plain}
\newtheorem{lem}[thm]{Lemma}
\newtheorem{cnj}[thm]{Conjecture}
\title{Improving the bound for maximum degree on Murty-Simon Conjecture}
\author[1]{A. Bahjati\footnote{amin.bahati@gmail.com}}
\author[1]{A. Jabalameli \footnote{ajabalameli@ce.sharif.edu}}
\author[1]{M. Ferdosi\footnote{mohsenferdosi@gmail.com}}
\author[1]{M. M. Shokri\footnote{M.MahdiShokri@gmail.com}}
\author[2]{M. Saghafian\footnote{ms.saghafian@gmail.com}}
\author[2]{S. Bahariyan\footnote{sorush.bahariyan@gmail.com}}
\affil[1]{Department of Computer Engineering, Sharif University of Technology, Tehran, Iran}
\affil[2]{Department of Mathematical Sciences, Sharif University of Technology, Tehran, Iran}
\begin{document}
\maketitle
\begin{abstract}
	A graph is said to be diameter-$k$-critical if its diameter is $k$ and removal of any of its edges increases its diameter.  A beautiful conjecture by Murty and Simon, says that every diameter-2-critical graph of order $n$ has at most $\lfloor n^2/4\rfloor$ edges and equality holds only for $K_{\lceil n/2 \rceil,\lfloor n/2 \rfloor }$. Haynes et al. proved that the conjecture is true for $\Delta\geq 0.7n$. They also proved that for $n>2000$, if $\Delta \geq 0.6789n$ then the conjecture is true. We will improve this bound by showing that the conjecture is true for every $n$ if $\Delta\geq\ 0.6755n$.
\end{abstract}
\section{Introduction}
Throughout this paper we assume that $G$ is a simple graph. Our notation is the same as \cite{survey}, let $G=(V,E)$ be a graph with vertex set $V$ of order $n$ and edge set $E$ of size $m$. For a vertex $v \in G$ we denote the set of its neighbors in $G$ by $N_G(u)$. Also we denote $N_G(u) \cup u$ by $N_G(u)$. The maximum and minimum degrees of $G$ will be denoted by $\Delta$ and $\delta$, respectively. The distance $d_G(u,v)$ between two vertices $u$ and $v$ of $G$, is the length of the shortest path between them. The \emph{diameter} of $G$, ($diam(G)$), is the maximum distance among all pairs of vertices in $G$.

We say graph $G$ is \emph{diameter-$k$-critical} if its diameter is $k$ and removal of any of its edges increases its diameter.
Based on a conjecture proposed by Murty and Simon \cite{conj}, there is an upper bound on the number of edges in a diameter-2-critical graph.

\begin{cnj}
	Let $G$ be a diameter-2-critical graph. Then $m \leq {[n^2/4]}$ and equality holds only if $K_{\lceil n/2 \rceil,\lfloor n/2 \rfloor }$.
\end{cnj}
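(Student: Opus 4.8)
The plan is to prove the full conjecture by splitting on the maximum degree $\Delta$ and exploiting that the bound is only nontrivial when $\Delta$ is large. First I would dispose of the cheap half: since $m=\tfrac12\sum_{v}\deg(v)\le \tfrac12 n\Delta$, the inequality $m\le \lfloor n^2/4\rfloor$ holds automatically whenever $\Delta\le n/2$, and there it is strict unless every degree equals $n/2$, which together with $diam(G)=2$ forces $K_{\lceil n/2\rceil,\lfloor n/2\rfloor}$. Thus the entire content of the conjecture lives in the range $\Delta>n/2$, where the degree sum alone no longer suffices and one must show that diameter-$2$-criticality itself drags $m$ back down to $n^2/4$. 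I would use two such constraints: every pair of nonadjacent vertices has a common neighbour (from $diam(G)=2$), and for every edge $uv$ deletion raises the diameter, so either the endpoints have no common neighbour or some vertex has its only length-two path to one endpoint running through the other. The latter sharply limits how many triangles an edge can lie in, and it is convenient to read it off in the complement $\overline{G}$, where the extremal configuration $K_{\lceil n/2\rceil,\lfloor n/2\rfloor}$ becomes a clean balanced object that makes the uniqueness part tractable.

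For the large-degree regime I would fix a vertex $v$ with $\deg(v)=\Delta$ and partition $V(G)=\{v\}\cup A\cup B$ with $A=N_G(v)$ and $B=V\setminus N_G(v)$, so $|A|=\Delta$ and $|B|=n-\Delta-1$ is small. Every vertex of $B$ must reach $v$ in two steps and hence has a neighbour in $A$; meanwhile every edge inside $A$ forms a triangle with $v$, so edge-criticality forces a private length-two structure that caps the edges inside $A$ and, crucially, the edges across the cut from $A$ into $B$. Assembling the counts inside $A$, inside $B$, and across the cut yields an inequality $m\le f(\Delta,n)$, after which I would check $f(\Delta,n)\le \lfloor n^2/4\rfloor$ on the admissible range and track the equality case to pin down $K_{\lceil n/2\rceil,\lfloor n/2\rfloor}$. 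When $\Delta$ is close to $n$ almost all vertices lie in $A$ and the criticality constraints on the many edges inside $A$ bite hard, forcing $m$ well below $n^2/4$, so the computation closes comfortably; the real work is in bookkeeping the $A$--$B$ edges tightly enough that the inequality keeps closing as $\Delta$ decreases.

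The genuine obstacle is therefore the intermediate window $n/2<\Delta<0.6755n$. Above $0.6755n$ the refined counting succeeds; this is precisely the improvement over the $0.7n$ threshold of Haynes et al., obtained by accounting more carefully for the cut edges and for the private-neighbour structure forced by criticality. Below $n/2$ the degree sum already wins. Inside the window, however, neither argument is decisive: the degree sum gives only $n\Delta/2>n^2/4$, while the structural bound $f(\Delta,n)$ has not been pushed below $n^2/4$, even though the true maximum is expected to be strictly smaller there (equality occurring only at $\Delta=n/2$). So the gap is a deficiency of the counting rather than a tight extremal configuration, and the only route known to cross this middle range for every $n$ is the regularity- and removal-lemma machinery behind the asymptotic proof, which is silent for moderate $n$. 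I would consequently not expect pure degree counting to settle the conjecture unconditionally; its realistic and intended deliverable is to sharpen the count enough to lower the high-degree threshold to $0.6755n$, thereby narrowing the still-open window to $(n/2,\,0.6755n)$ while establishing uniqueness of $K_{\lceil n/2\rceil,\lfloor n/2\rfloor}$ throughout the settled range.
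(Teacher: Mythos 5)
You were asked to prove Conjecture 1.1, i.e.\ the Murty--Simon conjecture itself, and the first thing to say is that the paper does not prove it either: it is stated as an open conjecture, and the paper's actual contribution (Theorems \ref{mainTh} and the final theorem) is only the partial result that $m<\lfloor n^2/4\rfloor$ when $\Delta(G)\ge 0.6756n$. Your proposal, to its credit, recognizes this honestly --- but that recognition is also its verdict: a plan that ends by ``narrowing the still-open window to $(n/2,\,0.6755n)$'' is a plan for the partial theorem, not a proof of the statement. As a proof of the conjecture it fails by your own admission, since nothing in it crosses the intermediate range of $\Delta$, and you correctly note that the only known route there for large $n$ (F\"uredi's argument) is silent for moderate $n$.

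Beyond that global gap, two concrete steps in the sketch would not survive scrutiny. First, your ``cheap half'' overclaims: from $\Delta\le n/2$ the degree sum does give $m\le n^2/4$, but equality forces only that $G$ is $n/2$-regular with $n$ even, and you assert that this \emph{together with} $\mathrm{diam}(G)=2$ already forces $K_{n/2,n/2}$. That is false without criticality: the circulant on $8$ vertices with connection set $\{\pm1,\pm2\}$ is $4$-regular of diameter $2$ with exactly $n^2/4=16$ edges and is not bipartite. Uniqueness must come from edge-criticality, and extracting it is part of the hard content of the conjecture, not a throwaway line. Second, even the high-degree regime is only gestured at: you propose direct triangle and cut-edge bookkeeping around a maximum-degree vertex in $G$, whereas the paper's mechanism is genuinely different --- it passes to the complement via Theorem \ref{complement} (Hanson--Wang), reduces to $3_t$-critical graphs (disposing of the $4_t$-supercritical case by Theorem \ref{4t}), and then counts edges of $\bar G$ using the quasi-edge lemma, the quasi-clique structure of $V-N[v]$ (Lemma \ref{B_qc}), and the inductive bound of Lemma \ref{shimi}, $|E(G[S^*])|+|E(G;S^*,V-(S^*\cup S))|\ge (|S^*|^2-2|S^*|)/c$ with $c=2+2\sqrt2$; the threshold $0.6756$ is not a tunable constant in your scheme but arises precisely as $1$ minus the smaller root of $(2c+4)x^2-4cx+c=0$. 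You give no counting that reproduces, or could be checked against, that constant. So even the completable portion of your plan would require importing the total-domination machinery (or an equivalent quantitative lemma) that the paper supplies.
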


Several authors have conducted some studies on the conjecture proving acceptable results nearly close to the original one, however, no complete proof  has been provided yet.
Plesnık \cite{plensik} showed that $m < \frac{3n(n - 1)}{8}$. Moreover, Caccetta and Haggkvist \cite{conj} proved
$m < 0.27n^2$. Fan \cite{fan} also proved the fact that for $n\leq24$ and for $n=26$ we have $m \leq {[\frac{n^2}{4}]}$. For
$n=25$, he achieved $m < \frac{n^2}{4} + \frac{(n^2 - 16.2n + 56)}{320} < 0.2532n^2$.  Another proof was presented by Xu \cite{Xu} in 1984, which was found out to have a small error. Afterwards, Furedi \cite{furedi} provided a considerable result showing that 
the original conjecture is true for large $n$, that is, for $n > n_0$ where $n_0$ is a tower of 2s of height about $10^{14}$.
This result is highly significant though not applicable to those graphs we are currently working with. 
\section{Total Domination}
	Domination number and Total domination number are parameters of graphs which are studied, respectively, in \cite{Haynes98a,advanced_domination_book} and \cite{total_domination_book}.  
	Assume $G=(V,E)$ is a simple graph. Let $X$ and $Y$ be subsets of $V$; We say that $X$ dominates $Y$, written $X \succ Y$,  if and only if every element of $Y-X$ has a neighbor in $X$.
	Similarly, we say that $X$ totally dominates $Y$, written $X \succ_t Y$  if and only if every element of $Y$ has a neighbor in $X$.
	If $X$ dominates or totally dominates $V$, we might write,  $X \succ G$ or  $X \succ_t G$ instead of $X \succ V$ and $X \succ_t V$,  respectively.
	Domination number and total domination number of $G=(V,E)$ are the size of smallest subset of $V$ that, dominates and totally dominates $V$, respectively.
	A graph $G$ with total domination number of $k$ is called $k_t$-critical, if every graph constructed by adding an edge between any nonadjacent vertices of $G$ has total domination number less than $k$.
	It is obvious that adding any edge to $k_t$-critical graph $G$ would result a graph which has total domination number of $k-1$ or $k-2$.   
	Assume $G$ is $k_t$-critical graph. If for every pair of non adjacent vertices $\{u,v\}$ of $G$, the total domination number of $G+uv$ is $k-2$, then $G$ is called $k_t$-supercritical.
	As shown in  \cite{hanson03} there is a great connection between diameter-2-critical graphs and total domination critical graphs:
	\begin{thm} \emph{(\cite{hanson03})}\label{complement}
		A graph is diameter-2-critical if and only if its complement is $3_t$-critical or $4_t$-supercritical. 
	\end{thm}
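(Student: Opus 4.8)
The plan is to translate every condition on $G$ into a condition on $H:=\bar G$ through the complementation duality, reducing the statement to a clean fact about the total domination number of $H$ and how it reacts to added edges. First I would establish the core lemma linking distances in $G$ to small totally dominating sets in $H$. A two-element set $\{a,b\}$ totally dominates $H=\bar G$ exactly when (i) $a$ and $b$ are adjacent in $H$ (forced, since each of $a,b$ must itself have a neighbor inside $\{a,b\}$), and (ii) every other vertex is $H$-adjacent to $a$ or to $b$. Rewriting (i)--(ii) back in $G$ says precisely that $a,b$ are non-adjacent in $G$ and share no common $G$-neighbor, i.e.\ $d_G(a,b)\ge 3$. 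Hence $\gamma_t(H)\le 2$ if and only if $G$ has a pair of vertices at distance at least $3$ (with different components counted as distance $\infty$), which is the same as $\mathrm{diam}(G)\ge 3$. Contrapositively, $\mathrm{diam}(G)\le 2$ if and only if no two-element set totally dominates $H$, i.e.\ $\gamma_t(H)\ge 3$ (whenever $\gamma_t(H)$ is defined).

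Next I would exploit the edge duality. Deleting an edge $uv$ from $G$ is the same as inserting the non-edge $uv$ into $H$, and $\overline{G-uv}=H+uv$. Applying the core lemma to $G-uv$ yields $\mathrm{diam}(G-uv)\ge 3 \iff \gamma_t(H+uv)\le 2$. Therefore ``$G$ is diameter-$2$-critical,'' meaning $\mathrm{diam}(G)=2$ together with $\mathrm{diam}(G-uv)\ge 3$ for every edge $uv$, translates exactly into the pair of conditions: $\gamma_t(H)\ge 3$, and $\gamma_t(H+uv)\le 2$ for every non-adjacent pair $uv$ of $H$. Since $\mathrm{diam}(G)=2$ forces $G$ to have at least one edge, there is indeed at least one such non-adjacent pair in $H$, so the second condition is non-vacuous.

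Now I would pin down the value of $\gamma_t(H)$ using the fact recorded in the excerpt that adding a single edge drops the total domination number by at most $2$, i.e.\ $\gamma_t(H+uv)\ge \gamma_t(H)-2$. Combining this with $\gamma_t(H)\ge 3$ and $\gamma_t(H+uv)\le 2$ gives $\gamma_t(H)-2\le 2$, hence $\gamma_t(H)\in\{3,4\}$. If $\gamma_t(H)=3$, then ``adding any edge yields $\gamma_t\le 2$'' is literally ``adding any edge strictly decreases $\gamma_t$,'' which is the definition of $3_t$-critical. If $\gamma_t(H)=4$, then each added edge must land in $[\,4-2,\,2\,]=\{2\}$, so every added edge drops $\gamma_t$ to exactly $k-2$, which is precisely the definition of $4_t$-supercritical. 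Reading every implication in reverse recovers the diameter-$2$-critical condition from the corresponding property of $H$, giving the equivalence in both directions.

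The step I expect to be the main obstacle is the clean proof of the core lemma together with the bookkeeping of degenerate cases: one must guarantee $\mathrm{diam}(G)=2$ rather than $1$ (forbidding $G$ from being complete), and one must handle vertices that are universal in $G$ and hence isolated in $H$, where $\gamma_t(H)$ is undefined. Treating these boundary situations correctly is what makes the biconditional genuinely tight rather than merely morally true; by contrast, the final case split on $\gamma_t(H)\in\{3,4\}$ is routine once the ``decrease by at most $2$'' bound is available.
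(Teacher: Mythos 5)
The paper does not prove this statement at all: Theorem~\ref{complement} is imported verbatim from Hanson and Wang \cite{hanson03}, so there is no in-paper proof to compare against. Your argument is the standard (essentially the only) proof of this equivalence, and its main line is sound: a pair $\{a,b\}$ totally dominates $\bar G$ iff $ab\notin E(G)$ and $a,b$ have no common $G$-neighbour, i.e.\ $d_G(a,b)\ge 3$; hence $\gamma_t(\bar G)\ge 3$ encodes $\mathrm{diam}(G)\le 2$, the identity $\overline{G-uv}=\bar G+uv$ transfers edge-criticality, and the bound $\gamma_t(\bar G)\le\gamma_t(\bar G+uv)+2$ pins $\gamma_t(\bar G)\in\{3,4\}$ and splits the conclusion into the $3_t$-critical and $4_t$-supercritical cases exactly as you describe. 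The reverse direction also goes through, since $3_t$- or $4_t$-criticality of $\bar G$ presupposes $\gamma_t(\bar G)$ is defined and forces $\bar G$ to have both an edge and a non-edge.

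The one piece you explicitly defer --- universal vertices of $G$, i.e.\ isolated vertices of $\bar G$ --- is a genuine loose end rather than mere bookkeeping, and you should close it before calling the proof complete. A short argument shows that a diameter-2-critical graph with a universal vertex $v$ can have no edge $xy$ with $x,y\ne v$ (deleting such an edge leaves every distance at most $2$ via $v$), so $G$ must be the star $K_{1,n-1}$. For the star, $\bar G=K_{n-1}\cup K_1$ has an isolated vertex, $\gamma_t(\bar G)$ is undefined, and $\bar G$ is neither $3_t$-critical nor $4_t$-supercritical in the sense of Theorem~\ref{4t} (one component is trivial); so the biconditional as literally stated requires either excluding stars from the definition of diameter-2-critical (e.g.\ by not counting disconnection as ``increasing the diameter'') or adopting the convention of \cite{hanson03}. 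Your proof is correct once this single exceptional family is disposed of one way or the other; everything else in your outline checks out.
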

	By this theorem in order to prove Murty-Simon conjecture, it suffices to prove that every graph which is $3_t$-critical, or $4_t$-critical , has at least $\lfloor n(n-2)/4 \rfloor$ edges where $n$ is order of graph. This problem is solved in some cases in \cite{haynes98b,haynes98c,haynes11} :
	\begin{thm} \emph{(\cite{haynes98b})}\label{4t}
		A graph $G$ is $4_t$-supercritical if and only if G is disjoint union of two nontrivial complete graphs.
	\end{thm}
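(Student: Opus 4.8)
The plan is to prove both directions, with the forward (``only if'') direction being the substantial one. Throughout, write $N(v)$ for the set of neighbours of $v$ and $\gamma_t(G)$ for the total domination number of $G$, and recall two facts: $\gamma_t(G)=2$ holds exactly when $G$ has two \emph{adjacent} vertices $x,y$ with $N(x)\cup N(y)=V$; and $\gamma_t$ of a disconnected graph is the sum of the total domination numbers of its components, each of which is at least $2$ (a single vertex cannot dominate itself).

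For the easy direction, suppose $G=K_a\cup K_b$ with $a,b\ge 2$. Each clique has total domination number $2$, so $\gamma_t(G)=4$. The non-edges of $G$ are exactly the pairs $\{u,v\}$ with $u\in K_a$ and $v\in K_b$; after adding such an edge the pair $\{u,v\}$ is adjacent and $N(u)\cup N(v)$ covers all of $K_a$ through $u$ and all of $K_b$ through $v$, so $\gamma_t(G+uv)=2=4-2$. Hence every edge addition drops $\gamma_t$ by exactly two, and $G$ is $4_t$-supercritical.

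For the hard direction, assume $G$ is $4_t$-supercritical, so $\gamma_t(G)=4$ and $\gamma_t(G+uv)=2$ for every non-edge $uv$. Since $\gamma_t$ is finite, $G$ has no isolated vertex, and since each component contributes at least $2$ to $\gamma_t(G)=4$, the graph $G$ has at most two components. If $G$ has exactly two components $C_1,C_2$, then $\gamma_t(C_1)=\gamma_t(C_2)=2$, and I claim each $C_i$ is complete: a non-edge $uv$ lying inside $C_1$ would leave $G+uv$ disconnected with the same two components (the second still contributing $2$), forcing $\gamma_t(G+uv)\ge 4$ and contradicting supercriticality. Thus in the two-component case $G=K_a\cup K_b$ with $a,b\ge 2$, as desired.

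It therefore remains to rule out the possibility that $G$ is connected, and this is the main obstacle. The first step is a structural lemma describing how $\gamma_t$ can fall to $2$: if $uv$ is a non-edge realising $\gamma_t(G+uv)=2$ through a dominating pair $\{x,y\}$, then since no pair dominates $G$ itself the set $\{x,y\}$ must meet $\{u,v\}$, and unwinding the two cases gives either (A) $N(u)\cup N(v)=V\setminus\{u,v\}$, or (B), after possibly swapping $u$ and $v$, a vertex $y$ with $uy\in E(G)$, $vy\notin E(G)$, and $N(u)\cup N(y)=V\setminus\{v\}$ — a ``near-dominating'' edge $uy$ that misses only the vertex $v$. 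Passing to the complement clarifies case (A): it is available for $uv$ precisely when the corresponding edge of $\overline{G}$ lies in no triangle of $\overline{G}$, and by Theorem~\ref{complement} the graph $\overline{G}$ is itself diameter-$2$-critical. The plan is then to show that the resulting constraints — one near-dominating edge for every non-edge whose complement-edge lies in a triangle, together with the requirement $\gamma_t(G)=4$ that \emph{no} edge of $G$ dominates all of $V$ — cannot be met simultaneously by a connected graph. I expect this incompatibility to be the crux: it cannot reduce to a characterisation of triangle-free diameter-$2$-critical graphs alone, since the Petersen graph shows these are too varied, so the argument must exploit the exact value $\gamma_t(G)=4$, combining two near-dominating edges to either exhibit a total dominating set of size $3$ or force a disconnection.
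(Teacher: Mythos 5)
The paper itself gives no proof of this statement; it is quoted from Haynes--Mynhardt--van der Merwe, so there is nothing internal to compare your approach against. Judged on its own terms, your proposal has a genuine gap: the easy direction and the two-component case are fine, but the connected case --- which you yourself identify as ``the main obstacle'' --- is never actually handled. You end with a plan (``I expect this incompatibility to be the crux\dots'') rather than an argument, and the detour through triangle-free diameter-$2$-critical complements is, as you suspect, not strong enough to finish on its own.

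The missing piece is shorter than you anticipate, and it uses $\gamma_t(G)=4$ exactly where you guessed it must. First, your case (B) can never occur: if $uy\in E(G)$ and $N(u)\cup N(y)=V\setminus\{v\}$, pick any neighbour $w$ of $v$ (one exists since $G$ is connected); then $w\in N(u)\cup N(y)$, so $\{u,y,w\}$ is a total dominating set of size $3$, contradicting $\gamma_t(G)=4$. Hence every non-edge $uv$ satisfies case (A), $N(u)\cup N(v)\supseteq V\setminus\{u,v\}$. Second, if such $u,v$ had a common neighbour $a$, then $\{u,v,a\}$ would again be a total dominating set of size $3$; so $N(u)$ and $N(v)$ partition $V\setminus\{u,v\}$. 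Now for any $a\in N(u)$ the pair $\{a,v\}$ is a non-edge, and applying the same two observations to it forces $N(a)=(\{u\}\cup N(u))\setminus\{a\}$. Thus $\{u\}\cup N(u)$ and $\{v\}\cup N(v)$ are two complete graphs with no edges between them, i.e.\ $G$ is disconnected after all, closing the case. Without some argument of this kind, your write-up establishes only the converse direction and the disconnected half of the forward direction, so it does not prove the theorem.
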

	\begin{thm} \emph{(\cite{haynes98c})}\label{diam23}
		If $G$ is a $3_t$-critical graph, then $2 \le diam(G) \le 3$.
	\end{thm}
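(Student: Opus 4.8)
The plan is to establish the two inequalities separately, treating $2 \le diam(G)$ as routine and concentrating on $diam(G) \le 3$. For the lower bound, note that the very existence of a size-$3$ total dominating set forces $G$ to have no isolated vertex, and in fact to be connected: if $G$ split into components $C_1,\dots,C_k$ with $k\ge 2$, each $C_i$ would need $\gamma_t(C_i)\ge 2$ (a single vertex can never totally dominate itself), so the total domination number would satisfy $\gamma_t(G)=\sum_i \gamma_t(C_i)\ge 4>3$. Thus $diam(G)$ is finite, and since a graph with $diam(G)\le 1$ is complete (or a single vertex), whence $\gamma_t\le 2$, we must have $diam(G)\ge 2$.

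For the upper bound I would argue by contradiction. Suppose $diam(G)\ge 4$ and fix $u,v$ with $d_G(u,v)\ge 4$, so in particular $u\not\sim v$. Since $G$ is $3_t$-critical, adding the edge $uv$ lowers the total domination number to $k-1$ or $k-2$, i.e.\ to $2$ or $1$; as $\gamma_t\ge 2$ always, we conclude $\gamma_t(G+uv)=2$. Let $S=\{a,b\}$ be a total dominating set of $G+uv$. Each of $a,b$ needs a neighbor inside $S$, so $a$ and $b$ are adjacent in $G+uv$. The key structural observation is that, because $S$ totally dominates $G+uv$ but not $G$ (as $\gamma_t(G)=3$), some vertex is dominated only through the new edge $uv$; since that edge creates neighbours only for $u$ and $v$, at least one endpoint of $uv$ must lie in $S$.

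The heart of the argument is then a two-case analysis supported by a distance estimate along a shortest $u$--$v$ path $u=x_0,x_1,\dots,x_\ell=v$ with $\ell\ge 4$. If $S=\{u,v\}$, then every vertex other than $u,v$ keeps the same neighbours in $G$ and $G+uv$, so each such vertex must be adjacent in $G$ to $u$ or to $v$; but $x_2$ has $d_G(u,x_2)=2$ and $d_G(v,x_2)=\ell-2\ge 2$, so it is adjacent to neither and is distinct from $u,v$, a contradiction. If instead exactly one endpoint, say $u$, lies in $S$, write $S=\{u,b\}$ with $b\ne v$; then $u\sim b$ in $G$, and $S$ must totally dominate every vertex of $G$ except possibly $v$. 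Using $d_G(u,b)=1$ we get $d_G(b,x_3)\ge d_G(u,x_3)-1=2$ while $d_G(u,x_3)=3$, so $x_3$ is adjacent in $G$ to neither $u$ nor $b$; since $\ell\ge 4$ gives $x_3\ne v$ (and $x_3\ne u$), this vertex is undominated, a contradiction. Hence $diam(G)\le 3$.

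I expect the main obstacle to be the bookkeeping in the second paragraph rather than any deep idea: one must argue cleanly that a two-element set can totally dominate $G+uv$ but not $G$ \emph{only} when the added edge is incident to $S$, and then verify in each case that the witness chosen on the shortest path is genuinely undominated and different from both $u$ and $v$. The inequality $d_G(u,v)\ge 4$ is exactly what guarantees such a witness exists, and selecting the correct interior vertex ($x_2$ when $S=\{u,v\}$ and $x_3$ when only one endpoint is in $S$) is the single delicate point of the verification.
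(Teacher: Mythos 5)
Your argument is correct, and it is worth noting that the paper itself offers no proof of this statement: Theorem~\ref{diam23} is imported verbatim from \cite{haynes98c} as a known result, so there is nothing internal to compare against. Your proof is the standard one and is sound in every step. The connectivity argument for the lower bound, the observation that $\gamma_t(G+uv)=2$ forces at least one of $u,v$ into the two-element total dominating set $S$ (since otherwise the neighbourhoods of the members of $S$ are unchanged and $S$ would already totally dominate $G$), and the choice of witness $x_2$ or $x_3$ on a geodesic of length at least $4$ all check out; the distance estimate $d_G(b,x_3)\ge d_G(u,x_3)-d_G(u,b)=2$ correctly rules out adjacency of $x_3$ to $b$. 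One phrase is slightly loose: in the second case $S$ must totally dominate \emph{every} vertex of $G+uv$, including $v$ (which is handled by the new edge $uv$), not merely ``every vertex of $G$ except possibly $v$''; but since your witness $x_3$ is an interior vertex distinct from both $u$ and $v$ and its neighbourhood is unaffected by the added edge, the contradiction stands regardless. No gap.
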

	\begin{thm} \emph{(\cite{haynes11})}
			Every $3_t$-critical graph of diameter 3 and order n has size $m \ge  n(n-2)/4  $.
	\end{thm}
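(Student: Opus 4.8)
The plan is to prove the bound directly for the $3_t$-critical graph $G$ of diameter $3$; this is exactly the complement of the Murty--Simon bound, since a graph on $n$ vertices with at most $\lfloor n^2/4\rfloor$ edges has complement with at least $\binom{n}{2}-\lfloor n^2/4\rfloor = \lceil n(n-2)/4\rceil \ge n(n-2)/4$ edges. So I want to show a $3_t$-critical diameter-$3$ graph is \emph{dense}. Fix a diametral pair $u,v$ with $d(u,v)=3$. Since $d(u,v)=3$ they share no common neighbour, so $N(u)\cap N(v)=\emptyset$, and I partition $V=\{u,v\}\cup A\cup B\cup C$ with $A=N(u)$, $B=N(v)$, and $C=V\setminus(\{u,v\}\cup A\cup B)$. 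Writing $a=|A|$, $b=|B|$, $c=|C|$, the vertices $u,v$ contribute exactly $a+b$ edges, so the target $m\ge n(n-2)/4$ reduces to a quadratic lower bound on the edges not incident to $u$ or $v$.

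The engine of the proof is edge--criticality. For any non-adjacent pair $pq$ we have $\gamma_t(G+pq)=2$ (it cannot be $1$, since a single vertex never totally dominates itself), so $G+pq$ has a total dominating set $\{x,y\}$, equivalently an edge $xy$ with $N_{G+pq}(x)\cup N_{G+pq}(y)=V$. I first observe that $\{x,y\}$ must meet $\{p,q\}$: otherwise adding $pq$ changes no neighbourhood relevant to $\{x,y\}$, so $\{x,y\}$ would already totally dominate $G$, contradicting $\gamma_t(G)=3$. Applying this to the non-edge $uv$ yields a dichotomy: either $\{x,y\}=\{u,v\}$, which forces $C=\emptyset$ and $V=\{u,v\}\cup A\cup B$; or one endpoint is (say) $u$ and its partner $y\in A$ is adjacent to all of $B\cup C$, i.e.\ $G$ has an \emph{apex} of very large degree. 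Running the same argument on non-edges inside $A$, inside $B$, and across the parts pins each missing edge to a dominating partner forced to lie on the opposite side, so that non-adjacency is expensive: a vertex with few neighbours manufactures a partner that nearly dominates $V$.

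With this structural information I would count edges. In the clean case $C=\emptyset$ the graph lives on $A\cup B$ together with $u,v$, and the forced dominating partners for the non-edges inside $A$ and inside $B$ push $e(A)+e(B)+e(A,B)$ up to at least $(n-2)(n-4)/4$, giving $m\ge (n-2)+(n-2)(n-4)/4 = n(n-2)/4$. The apex case is handled by isolating the high-degree vertex and arguing, by a direct surplus count, that its large degree already pays for any deficit on the remaining graph. An optimisation over the free parameters $a,b,c$ (balancing the sizes of the sides) shows the worst configuration is $a\approx b$ with $C$ empty, which is precisely where $n(n-2)/4$ is attained.

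The main obstacle is the aggregation step. The criticality argument is \emph{local}: it produces, for each non-edge, a single dominating pair. The difficulty is to sum these local constraints into one quadratic bound without double counting the forced edges, while simultaneously controlling the middle set $C$ and the asymmetric apex configurations. I expect the bulk of the work to be a careful case analysis of where the dominating partners land (same side, opposite side, or at $u,v$), together with the extremal balancing of $|A|,|B|,|C|$. Pinning down the equality graphs, and making every estimate uniform in $n$ so that the conclusion holds for all $n$ rather than only asymptotically, is the delicate part.
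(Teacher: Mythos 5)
The paper you are working from does not prove this statement at all: it is imported from \cite{haynes11}, where the proof occupies essentially an entire article. So your attempt must stand on its own, and as it stands it is an outline whose decisive step is missing. The parts you do carry out are correct and are the standard opening: a diametral pair $u,v$ at distance $3$ gives $N(u)\cap N(v)=\emptyset$ (equivalently, $uv$ is a dominating edge of the complementary diameter-$2$-critical graph), the partition $V=\{u,v\}\cup A\cup B\cup C$ is the right frame, and the observation that $\gamma_t(G+pq)=2$ with the new dominating pair forced to meet $\{p,q\}$ is exactly the mechanism behind the quasi-edge Lemma 3.1 of the present paper. The dichotomy for the non-edge $uv$ ($C=\emptyset$ versus a near-universal partner in $A$ or $B$) is also sound.

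The genuine gap is the aggregation step, which you name but do not close. The assertion that the forced partners "push $e(A)+e(B)+e(A,B)$ up to at least $(n-2)(n-4)/4$" \emph{is} the theorem: it is a bound of exactly the same strength as the target, on the $n-2$ vertices of $A\cup B$, and since $G-u-v$ need not be $3_t$-critical no induction is available to supply it. The local constraints do not obviously sum to a quadratic bound, because a single vertex can serve as the forced partner for many non-edges, so the constraints are highly dependent and some explicit charging scheme (controlling how many non-edges each forced edge can be blamed for) is required; none is proposed. The apex case is likewise dismissed with "a direct surplus count" that is never performed. Contrast this with how the present paper handles the analogous difficulty in its own setting: Lemma \ref{shimi} is a genuine induction with an explicit extremal function $f(|A|)$ whose minimization is verified in the appendix. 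Something of that precision is what is missing here; without it the proposal is a plan, not a proof.
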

	By this theorems a proof for following conjecture will show that Murty-Simon conjecture is true.
	\begin{cnj}
		A $3_t$-critical graph of order $n$ and of diameter 2 has size $m \ge n(n-2)/4$.
	\end{cnj}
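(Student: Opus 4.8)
The plan is to work directly in the total-domination picture: let $G$ be a $3_t$-critical graph of order $n$ with $\operatorname{diam}(G)=2$, and aim to show $m\ge n(n-2)/4$. By Theorem~\ref{complement} the complement $\overline G$ is diameter-2-critical, and since $m(\overline G)=\binom{n}{2}-m$, the inequality $m\ge n(n-2)/4$ is exactly the Murty--Simon bound $m(\overline G)\le\lfloor n^2/4\rfloor$. I may therefore pass freely between the two pictures and count whichever is convenient, but I will phrase the main argument on $G$, because $3_t$-criticality yields very clean local information.

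First I would extract that local structure. Since $\gamma_t(G)=3$ and adding any edge drops the total domination number (necessarily to $2$, because $\gamma_t=1$ is impossible), for every non-edge $uv$ there is a two-element set totally dominating $G+uv$ that uses the new edge. Examining which vertex the new edge is allowed to repair gives, for every non-edge $uv$, exactly one of two situations: (A) $\{u,v\}$ itself is the dominating pair, so $N(u)\cup N(v)\supseteq V\setminus\{u,v\}$ and hence $\deg(u)+\deg(v)\ge n-2$; or (B) one of $u,v$ — say $u$ — has a neighbor $y\notin\{u,v\}$ with $N(u)\cup N(y)\supseteq V\setminus\{v\}$, so $u\sim y$ and $\deg(u)+\deg(y)\ge n-1$. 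In either case a pair of vertices, at least one incident to the non-edge, has degree sum close to $n$. The consequence I would isolate is a \emph{local Ore-type condition}: every vertex $u$ that has a non-neighbor admits a partner $z$ (a non-neighbor in case (A), an actual neighbor in case (B)) with $\deg(z)\ge n-2-\deg(u)$, so low-degree vertices are always matched to, or adjacent to, high-degree vertices.

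Next I would split on the maximum degree $\Delta$. The regime of large $\Delta$ is precisely what the counting of this paper (and of Haynes et al.) controls: fixing a vertex $v$ of degree $\Delta$, partitioning $V$ into $\{v\}$, $N(v)$ and the remainder $V\setminus N[v]$, and combining the diameter-2 property (each vertex of the remainder has a neighbor in $N(v)$) with conditions (A)/(B) to count edges within and across the parts yields the bound once $\Delta$ is a sufficiently large fraction of $n$. The real work is the complementary regime of small and moderate $\Delta$. Here I would combine two ingredients: the Moore-type inequality $n\le 1+\Delta^2$ forced by $\operatorname{diam}(G)=2$, which already rules out $\Delta$ smaller than $\sqrt{n-1}$; and the local Ore condition, which shows that the neighborhoods of the (necessarily numerous) low-degree vertices must be absorbed by a controlled set of high-degree vertices. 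Concretely I would set up a discharging scheme assigning each vertex initial charge $\deg(v)$, let low-degree vertices send their deficit below $(n-2)/2$ to their high-degree partners supplied by (A)/(B), and try to show the average final charge is at least $(n-2)/2$, i.e. $2m\ge n(n-2)/2$.

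The hard part, and the genuine heart of the proof, will be the moderate-degree regime and the near-extremal configurations. When many vertices have degree close to $n/2$, conditions (A) and (B) are only barely met, the overlaps $N(u)\cap N(y)$ become large, and naive degree-sum bookkeeping loses a constant factor exactly where the inequality is tight; controlling these overlaps is the quantitative loss that the present paper absorbs by assuming $\Delta\ge 0.6755\,n$. To close the gap for all $\Delta$ I expect one must analyze the subgraph induced on the high-degree vertices, show that in $\overline G$ it is dense and nearly complete bipartite, and then prove a stability statement: any diameter-2-critical $\overline G$ with edge count within $o(n^2)$ of $n^2/4$ must be structurally close to $K_{\lceil n/2\rceil,\lfloor n/2\rfloor}$. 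Theorem~\ref{4t} would then pin down the boundary, since the only graphs meeting $n^2/4$ have complement equal to a disjoint union of two complete graphs (the $4_t$-supercritical case), which is disconnected and hence not diameter-2; this both forces strict inequality for connected diameter-2 graphs and identifies the extremal family. Proving this stability \emph{uniformly in $n$}, without the tower-of-twos largeness inherent in Füredi's regularity argument, is the step I anticipate being the main obstacle.
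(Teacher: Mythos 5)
You were asked about a statement that the paper itself does not prove: it is stated as a conjecture, and via Theorems \ref{complement}--\ref{diam23} and the diameter-3 theorem it is equivalent to the full Murty--Simon conjecture in the diameter-2 case, which is open (except for F\"uredi's astronomically-large-$n$ result). The paper's actual contribution is only the regime $\delta(G)\le an-1$, i.e.\ $\Delta(\overline{G})\ge 0.6756\,n$ (Theorem \ref{mainTh}), obtained by fixing a minimum-degree vertex $v$, partitioning $V$ into $\{v\}$, $N(v)$, and $V\setminus N[v]$, and counting quasi-edges via Lemmas \ref{shimi} and \ref{B_qc}. Your sketch of the ``large $\Delta$'' half is a fair rendering of that mechanism, and your case-(A)/(B) dichotomy correctly reproduces the quasi-edge lemma. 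But the remainder of your proposal is a program, not a proof, and you acknowledge as much; so the question is whether the program could close the gap, and as written it cannot.

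The concrete failure is in the discharging step for small and moderate $\Delta$. Conditions (A)/(B) assign each deficient vertex $u$ a partner $z$ with $\deg(z)\ge n-2-\deg(u)$, but nothing forces these partners to be distinct: a single high-degree vertex can serve as the partner of linearly many low-degree vertices. Its surplus above $(n-2)/2$ is at most roughly $n/2$, while the total deficit to be absorbed can be of order $n^2$ (say, $\Theta(n)$ vertices of degree $\epsilon n$), so the average-charge inequality $2m\ge n(n-2)/2$ simply does not follow from the local Ore condition without an injectivity or multiplicity bound that (A)/(B) do not supply --- this is exactly the overlap loss you flag, and it is where every known partial bound (Plesn\'{\i}k's $3n(n-1)/8$, Caccetta--H\"aggkvist's $0.27n^2$, Fan's $0.2532n^2$) stops. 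The Moore bound $n\le 1+\Delta^2$ is a red herring, since it only excludes $\Delta<\sqrt{n-1}$, far below the genuinely hard range $\Delta\approx n/2$. Finally, the stability statement you invoke (edge count within $o(n^2)$ of $n^2/4$ forces structure close to $K_{\lceil n/2\rceil,\lfloor n/2\rfloor}$, uniformly in $n$) is itself an open problem --- F\"uredi's argument yields it only for $n$ beyond a tower of 2s, which is precisely the limitation you would need to remove. So your outline reduces the conjecture to two unproved statements rather than proving it; the content that can be verified coincides with what the paper (following Haynes et al.) actually establishes, namely the case $\Delta(\overline{G})\ge 0.6756\,n$.
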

	More recently Haynes et al. proved the following: 
	\begin{thm}\label{this} \emph{(\cite{haynes14})}
		Let $G$ be a $3_t$-critical graph of order $n$ and size $m$. Let $\delta = \delta (G)$. Then the following holds:\\
		a)If $\delta \geq 0.3n $, then $ m > \lceil n(n-2)/4 \rceil $.\\
		b)If $n \ge 2000$ and $\delta \geq 0.321n$, then $ m > \lceil n(n-2)/4 \rceil $.
	\end{thm}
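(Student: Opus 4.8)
The plan is to pass through Theorem~\ref{complement} and argue entirely about a $3_t$-critical graph $G$: by the quoted results it suffices to bound its size $m$ from below, and the criticality can be turned into strong degree constraints. The basic mechanism is that for every non-adjacent pair $\{u,v\}$ the graph $G+uv$ has total domination number $2$, so there is an adjacent pair $\{a,b\}$ with $\{a,b\}\succ_t G+uv$; since $\gamma_t(G)=3$ no adjacent pair dominates $G$ itself, the new edge $uv$ must be indispensable. Tracing which vertex $uv$ is needed to cover splits into three cases: (A) $\{a,b\}=\{u,v\}$, giving $N(u)\cup N(v)\supseteq V\setminus\{u,v\}$ and hence $\deg(u)+\deg(v)\ge n-2$; (B) $a=u$ with a neighbour $b$ of $u$ such that $\{u,b\}\succ_t V\setminus\{v\}$, where $v\not\sim b$; and (B$'$) the mirror case $a=v$ with $b\sim v$ and $\{v,b\}\succ_t V\setminus\{u\}$. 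In (B) and (B$'$) the witnessing edge totally dominates $n-1$ vertices, so its endpoints have degree sum at least $n-1$.

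First I would fix a vertex $v$ of minimum degree $\delta$ and apply the trichotomy to each of its $n-1-\delta$ non-neighbours. Two structural facts drive the count. First, the partners in case (B$'$) are injective: a single $b\in N(v)$ with $\{v,b\}\succ_t V\setminus\{u\}$ for two distinct $u$ would force $\{v,b\}\succ_t G$, contradicting $\gamma_t(G)=3$; hence at most $\delta$ non-neighbours can use the cheap case (B$'$). Second, a case-(B) pair $\{u,b\}$ totally dominates $V\setminus\{v\}$, so every other vertex is adjacent to $u$ or to $b$; consequently if there are $k$ vertex-disjoint case-(B) edges then every vertex of $V\setminus\{v\}$ has at least $k$ neighbours among their endpoints, which raises all degrees simultaneously. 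Thus the two genuinely cheap ways for a non-neighbour to avoid large degree are both self-limiting: case (B$'$) by injectivity, and case (B) because using it many times densifies the whole graph.

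I would then bound $\sum_x\deg(x)=2m$ from below by combining these contributions: every vertex contributes at least $\delta$; each case-(A) non-neighbour contributes at least $n-2-\delta$; each case-(B$'$) partner contributes at least $n-1-\delta$; and each family of case-(B) edges contributes through both its pairwise sums $\ge n-1$ and the global boost just described. Optimising over the worst admissible split of the non-neighbours among the three cases—subject to the cap $\delta$ on case (B$'$) and to the densification forced by case (B)—is exactly what pins the constant, and the crossover at which the guaranteed value of $2m$ overtakes $n(n-2)/2$ is designed to land at the value $0.3n$ asserted in the theorem. I expect case (B) to be the main obstacle: taken in isolation its bound $\deg(u)+\deg(b)\ge n-1$ is too weak, since it permits degrees near $n/2$, and only the mutual-domination boost, balanced against how cheaply the adversary can distribute the remaining non-neighbours, closes the gap; keeping the several families of inequalities from double-counting the same edges is the delicate point.

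For the second part I would run the identical estimate under the extra hypothesis $n\ge2000$, where one can afford to discard the additive $O(1)$ slack—the $-1$ and $-2$ terms and a bounded number of exceptional vertices—that a bound valid for all $n$ must retain. If optimising around a single minimum-degree vertex proves too lossy, I would instead average the argument over all vertices of near-minimum degree, which sharpens the worst-case distribution. Tracking these lower-order terms carefully is precisely what improves the admissible constant from $0.3$ to $0.321$, at the cost of requiring $n$ large.
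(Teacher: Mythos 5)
There is a genuine problem here, and it starts before your proof does: the paper does not prove this statement at all --- it is quoted from \cite{haynes14} --- and the statement as printed contains a sign error that your proposal inherits. The hypothesis should read $\delta \le 0.3n$ (resp.\ $\delta \le 0.321n$), not $\ge$: the theorem is the complement form of ``$\Delta(G)\ge 0.7n$ implies Murty--Simon,'' and $\Delta(G)\ge 0.7n$ translates to $\delta(\bar G)=n-1-\Delta(G)\le 0.3n-1$. This is consistent with the abstract and with the paper's own Theorem \ref{mainTh}, whose hypothesis is $\delta\le an-1$. Taken literally with $\ge$, part (a) together with Theorem \ref{mainTh} would cover every possible $\delta$ and settle the conjecture for all $3_t$-critical graphs, which is certainly not what \cite{haynes14} proved. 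Your entire plan is built around the large-minimum-degree reading (``every vertex contributes at least $\delta$'' to $2m$), so it is aimed at what is essentially an open statement rather than at the theorem actually being cited.

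Even on its own terms the counting does not close. Your trichotomy is correct and is exactly the quasi-edge/unique-supplement mechanism of the paper's Lemma 3.1, and the injectivity of case-(B$'$) partners is right. But in case (B) you only get $\deg(u)+\deg(b)\ge n-1$, and nothing prevents all the case-(B) witnesses $b$ from coinciding in a single high-degree vertex; then your ``densification'' boost (which needs \emph{vertex-disjoint} case-(B) edges) contributes $O(n)$, and the degree sum bottoms out around $2m\ge n\delta+\delta(n-1-2\delta)+O(n)\approx 0.42n^2$ at $\delta=0.3n$, i.e.\ $m\approx 0.21n^2<n(n-2)/4$. The actual argument (both in \cite{haynes14} and in this paper's Theorem \ref{mainTh}) runs in the opposite direction: because $\delta$ is \emph{small}, the set $B=V-N[v]$ has $n-1-\delta\ge 0.7n-1$ vertices and is a quasi-clique (Lemma \ref{B_qc}), so its $\binom{n-1-\delta}{2}$ pairs contribute that many distinct edges or quasi-edges (distinct supplements), to which one adds the $\delta$ edges at $v$ and a separately proved quadratic lower bound on the edges inside and leaving $A=N(v)$ (the role of Lemma \ref{shimi} here, with constant $c$; \cite{haynes14} uses a weaker constant). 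If you want to reconstruct the cited theorem, that is the decomposition to follow.
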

	Also G. Fan et al. proved that:
	\begin{thm}\label{n<25} \emph{(\cite{fan})}
			The Murty-Simon conjecture is true for every graph with less than 25 verices.  
		\end{thm}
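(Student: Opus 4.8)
The plan is to pass to the complement and exploit the total-domination reduction, then dispatch the cases by structure and degree. By Theorem~\ref{complement}, if $G$ is a diameter-2-critical graph of order $n<25$, then its complement $\bar G$ is either $3_t$-critical or $4_t$-supercritical. Writing $m(\cdot)$ for the number of edges, the target inequality $m(G)\le\lfloor n^2/4\rfloor$ is equivalent to $m(\bar G)\ge\lceil n(n-2)/4\rceil$, since $\binom n2-\lfloor n^2/4\rfloor=\lceil n(n-2)/4\rceil$ for every parity of $n$. So it suffices to bound $m(\bar G)$ from below by $\lceil n(n-2)/4\rceil$.

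First I would clear the supercritical case, which holds for all $n$ at once. By Theorem~\ref{4t}, $\bar G=K_a\cup K_b$ with $a+b=n$ and $a,b\ge 2$, so $m(\bar G)=\binom a2+\binom b2$. This quantity is a convex function of the split $(a,b)$ and is minimised at the most balanced partition, where it equals exactly $\lceil n(n-2)/4\rceil$; hence $m(\bar G)\ge\lceil n(n-2)/4\rceil$, with equality only when $\{a,b\}=\{\lfloor n/2\rfloor,\lceil n/2\rceil\}$, i.e.\ $G=K_{\lceil n/2\rceil,\lfloor n/2\rfloor}$. This matches the conjectured extremal configuration and disposes of the $4_t$-supercritical case entirely.

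It then remains to treat $3_t$-critical $\bar G$. By Theorem~\ref{diam23} the diameter of $\bar G$ is $2$ or $3$. If it is $3$, the result of \cite{haynes11} gives $m(\bar G)\ge n(n-2)/4$ directly. If it is $2$, I would split on the minimum degree of $\bar G$. When $\delta(\bar G)\ge 0.3n$, Theorem~\ref{this}(a) already yields $m(\bar G)>\lceil n(n-2)/4\rceil$; and at the opposite extreme, the complementary condition $\delta(\bar G)\le 0.3n-1$ corresponds to $\Delta(G)\ge 0.7n$, which is covered by the established large-max-degree bound of \cite{haynes14}. The only configurations escaping both thresholds are those with $\delta(\bar G)$ in the open window $(0.3n-1,\,0.3n)$, which for each fixed $n$ contains at most one admissible integer value, equivalently the single value of $\Delta(G)$ just below $0.7n$.

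The hard part is precisely this residual window, which is exactly the regime where neither global degree bound is available; this is the essential content of the still-open diameter-2 case. For each fixed $n<25$ it reduces to a finite family of $3_t$-critical diameter-2 graphs of small order with one prescribed minimum degree, so it can be settled by a direct structural count: fix a vertex $u$ of maximum degree, partition $V(G)\setminus\{u\}$ into $N_G(u)$ and the distance-two set, and use the defining criticality property (deleting any edge must create a pair at distance at least $3$) to bound the number of edges between and within these parts, forcing $m(G)\le\lfloor n^2/4\rfloor$. Alternatively, because the orders involved are so small, the same finite family can be resolved by exhaustive computer search. I expect all the genuine difficulty to sit in this degree-window analysis rather than in the reduction, since the reduction and the two extreme-degree cases are immediate consequences of the results already cited.
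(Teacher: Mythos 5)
This statement is not proved in the paper at all: it is imported verbatim from Fan's 1987 paper \cite{fan}, whose argument is a direct edge-counting analysis of the diameter-2-critical graph itself and predates the total-domination machinery (Theorem~\ref{complement} is from 2003) by many years. So any comparison is between your sketch and Fan's external proof, and your sketch has two genuine gaps.

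First, the two ``extreme'' degree cases you invoke are not two different results but complementary readings of one and the same theorem of Haynes et al. Since $\delta(\bar G)=n-1-\Delta(G)$, the condition $\delta(\bar G)\le 0.3n-1$ \emph{is} the condition $\Delta(G)\ge 0.7n$; the actual content of \cite{haynes14} is only this small-$\delta(\bar G)$ (large-$\Delta(G)$) case, and the paper's rendering of Theorem~\ref{this}(a) with ``$\delta\ge 0.3n$'' is inconsistent with its own abstract and with the cited literature. A useful sanity check you missed: if both of your cases were genuinely available, your argument would prove the Murty--Simon conjecture for \emph{all} $n$ up to a single exceptional value of $\delta(\bar G)$ per $n$, which cannot be right since the conjecture is open. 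Consequently the ``residual window'' is not one integer value of $\delta(\bar G)$ but the entire range $\Delta(G)\le 0.7n-1$. Second, even granting your framing, the residual case --- which is where all of the content of the theorem lives --- is only gestured at (``a direct structural count \ldots or exhaustive computer search'') with no argument actually carried out; the hypothesis $n<25$ is never used in any substantive way. The reduction steps you do execute (the complement identity $\binom n2-\lfloor n^2/4\rfloor=\lceil n(n-2)/4\rceil$, the $4_t$-supercritical case via Theorem~\ref{4t}, and the diameter-3 case via \cite{haynes11}) are correct but are the easy part; what remains unproved is exactly the theorem.
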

	In next section, in order to improve this bound, we will prove that, every simple diameter-2-critical graph of order $n$ and size $m$ satisfies $ m <\lfloor n^2/4 \rfloor $ if  $\Delta\geq\ 0.6756n$.
	
\section{Main Result}
	In this section we will prove Murty-Simon conjecture for graphs which their complement are $3_t$-critical and have less restriction on their minimum degree and improve the result proposed by Haynes et al in \cite{haynes14}. First we recall the following lemma, which was proposed in that paper.
\begin{lem}
	Let $u$ and $v$ are nonadjacent vertices in $3_t$-critical graph $G$, clearly  $\{u,v\} \nsucc G$.
	Then there exists a vertex $w$, such that $w$ is adjacent to exactly one of $u,v$, say $u$, and $\{u,w\} \succ G-v$. We will call $uw$ \emph{quasi-edge} associated with $uv$. Further $v$ is the unique vertex not dominated by $\{u,w\}$ in $G$; In this case we call $v$ \emph{supplement} of $\{u,w\}$.
\end{lem}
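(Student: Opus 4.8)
The plan is to exploit edge-criticality by adding the missing edge $uv$. Since $u,v$ are nonadjacent and $G$ is $3_t$-critical, the remark recorded above (adding an edge to a $3_t$-critical graph drops the total domination number to $k-1$ or $k-2$, here to $1$ or $2$) applies; as no graph without isolated vertices can have total domination number $1$, we get that the total domination number of $G+uv$ is exactly $2$. Fix a total dominating set $S=\{a,b\}$ of $G+uv$ of size $2$. Each of $a,b$ must have a neighbor in $S$, so $ab\in E(G+uv)$, and every vertex of $G$ has a neighbor in $S$ when adjacencies are read in $G+uv$. The first step is the observation that $S\cap\{u,v\}\neq\emptyset$: otherwise neither endpoint of the new edge lies in $S$, so $uv$ is irrelevant to the domination furnished by $S$, whence $S$ would already totally dominate $G$, contradicting that its total domination number is $3$.

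Next I would extract the quasi-edge in the case where $S$ meets $\{u,v\}$ in exactly one vertex. Say (after possibly interchanging the names $u,v$) that $v\in S$ and $u\notin S$, so $S=\{v,w\}$ with $w\neq u,v$ and $vw\in E(G)$, the edge $vw$ not being the added one. Since $S$ totally dominates $G+uv$ but not $G$, deleting $uv$ must uncover some vertex; but the only adjacencies that change are those of the pair $u,v$, and $v$ keeps its neighbor $w\in S$ while every vertex other than $u,v$ has identical neighbors in $G$ and $G+uv$. Hence the uncovered vertex is $u$, so $u$ has no neighbor in $\{v,w\}$ in $G$; in particular $w\not\sim u$, so $w$ is adjacent to exactly one of $u,v$, we have $\{v,w\}\succ G-u$, and $u$ is the unique vertex of $G$ with no neighbor in $\{v,w\}$. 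This is precisely the asserted quasi-edge (with the roles of $u$ and $v$ exchanged).

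The main obstacle is the remaining case $S=\{u,v\}$, i.e. when $\{u,v\}$ itself totally dominates $G+uv$, equivalently every vertex other than $u,v$ is adjacent to $u$ or to $v$; this is in fact the \emph{typical} situation, since a nonadjacent pair in a $3_t$-critical graph need not admit a third vertex outside both neighborhoods. Here the naive $S$ supplies no third vertex, and one must still produce a size-$2$ total dominating set of $G+uv$ meeting $\{u,v\}$ in exactly one element. My plan is to partition $V\setminus\{u,v\}$ into the private neighborhood $U$ of $u$, the private neighborhood $W$ of $v$, and the common neighborhood $M$, and to show that some $w\in U$ is adjacent to all of $W$ (or symmetrically some $w\in W$ is adjacent to all of $U$); any such $w\in U$ gives the total dominating set $\{u,w\}$ of $G+uv$ (note $w\not\sim v$, so $v$ is covered through the new edge, forcing $v$ to be the unique undominated vertex of $G$), hence the quasi-edge $uw$ with supplement $v$. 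To force such a $w$ to exist I would feed edge-criticality back in at the auxiliary non-edges: for each $b\in W$ the pair $ub$ is a non-edge, so the total domination number of $G+ub$ is again $2$, and dissecting its size-$2$ total dominating sets constrains how the vertices of $W$ and $M$ cover one another, ultimately excluding the configuration in which every vertex of $U$ misses a vertex of $W$ and every vertex of $W$ misses a vertex of $U$ (the degenerate all-empty subcases $W=\emptyset$ or $U=W=\emptyset$ being disposed of directly). I expect this last step—proving that criticality at the auxiliary non-edges is incompatible with the absence of a vertex dominating the opposite private neighborhood—to be the technical heart of the proof, the earlier reductions being routine.
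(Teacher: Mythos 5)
The paper offers no proof of this lemma at all---it is recalled verbatim from \cite{haynes14}---so your attempt has to stand on its own, and as written it does not: the case you defer is the entire content of the statement. Your opening reductions are correct. Indeed $\gamma_t(G+uv)=2$ because the total domination number drops below $3$ and cannot equal $1$; a size-$2$ total dominating set $S$ of $G+uv$ must meet $\{u,v\}$, since otherwise the new edge is irrelevant and $S$ would totally dominate $G$; and when $|S\cap\{u,v\}|=1$, say $S=\{v,w\}$, the only vertex whose domination status can differ between $G$ and $G+uv$ is $u$, which yields exactly the asserted quasi-edge. But a priori every $\gamma_t$-set of $G+uv$ may equal $\{u,v\}$ (equivalently: $\{u,v\}$ dominates $G$ and no private neighbour of $u$ is adjacent to all private neighbours of $v$, nor vice versa), and for that situation you supply only a programme---``feed edge-criticality back in at the auxiliary non-edges \dots\ I expect this last step to be the technical heart''---rather than an argument. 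Without that step you have proved nothing, because the easy case might never occur. Note also that your plan silently needs $U\neq\emptyset$ and $W\neq\emptyset$: if, say, $U=\emptyset$ then $N(u)\subseteq N(v)$, and since $\{u,v\}$ dominates $G$ in this case, $\{v,c\}$ for any $c\in N(u)$ would totally dominate $G$, contradicting $\gamma_t(G)=3$; you wave at the ``degenerate all-empty subcases'' but this is where they must actually be excluded.

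One further caution about the statement itself. The parenthetical ``clearly $\{u,v\}\nsucc G$'' would, read literally, make your hard case vacuous and your proof complete; but it cannot be read literally. In $C_5$, which is $3_t$-critical, the nonadjacent pair $\{1,3\}$ does dominate the graph, yet the conclusion of the lemma still holds there (the set $\{1,5\}$ dominates $C_5-3$, with $3$ the unique undominated vertex). The intended assertion is surely ``$\{u,v\}$ does not \emph{totally} dominate $G$,'' which is immediate from $\gamma_t(G)=3$ but does not by itself produce the vertex $w$. So you are right that the configuration $S=\{u,v\}$ is genuinely possible and must be dealt with; the gap is that you have not dealt with it.
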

\begin{dfn}  
	Let $G=(V,E)$ be a $3_t$-critical graph.
	If $S\subseteq V$ then we say that  $S$ is a \emph{quasi-clique} if for each nonadjacent pair of vertices of $S$ there exists a quasi-edge associated with that pair, and  each quasi-edge associated with that pair at contains at least on vertex outside $S$.
	Edges \emph{associated with} quasi-clique $S$ are the union of the edges with both ends in $S$ and the quasi-edges associated with some pair of nonadjacent vertices of $S$. 
\end{dfn}
\begin{dfn}  
	Let $G=(V,E)$ be a $3_t$-critical graph.
	Let $A$ and $B$ be two disjoint subsets of $V$. We define $E(G;A,B)$ as set of all edges $\{a,b\}$ where $a \in A$ and $b \in B$, and $\{a,b\}$ is associated with a non adjacent pair $\{a,c\}$, where $c$ is in $A$. By lemma 3.1, we know that every two members of $E(G;A,B)$ are associated with different non adjacent pairs. 
	
\end{dfn}
\begin{lem}\label{shimi}
	Let $G$ be a $3_t$-critical graph.
	Let $S \subset V(G)$, if $S^*=\cap_{s\in S} N(s)$ ,then the following holds: $$|E(G[S^*])|+|E(G;S^*,V(G)-(S^*\cup S))|\geq  \frac{|S^*|^2-2|S^*|}{c}$$
	
	Where $c$ is the greatest root of $x^2-4x-4=0$, which is equal to $2+2\sqrt2\approx4.83$.
\end{lem}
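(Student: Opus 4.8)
The plan is to reduce the claimed inequality to an upper bound on the number of non-adjacent pairs inside $S^*$ whose quasi-edge also lies inside $S^*$, and then to control that number through the domination forced by Lemma 3.1. Write $t=|S^*|$, and note first that every vertex of $S^*$ is adjacent to every vertex of $S$ by the definition of $S^*$. Hence if $\{u,v\}$ is a non-adjacent pair of $S^*$ and $uw$ is a quasi-edge associated with it (so $w$ is adjacent to $u$ but not to $v$), then $w\notin S$: otherwise $v\in S^*$ would force $v\sim w$. Consequently every quasi-edge of a non-adjacent pair of $S^*$ is either an edge of $G[S^*]$ or an edge of $E(G;S^*,V(G)-(S^*\cup S))$.

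Next I would set up the reduction. Let $N=\binom{t}{2}-|E(G[S^*])|$ be the number of non-adjacent pairs of $S^*$, let $I$ denote those pairs possessing a quasi-edge inside $G[S^*]$, and let $C$ denote those possessing a quasi-edge in $E(G;S^*,V(G)-(S^*\cup S))$. By Lemma 3.1 every non-adjacent pair has at least one quasi-edge, so $|I|+|C|\ge N$; by the injectivity recorded in Definition 3.2, distinct pairs in $C$ yield distinct cross quasi-edges, whence $|E(G;S^*,V(G)-(S^*\cup S))|\ge |C|\ge N-|I|$. Adding $|E(G[S^*])|$ to both sides gives
\[
|E(G[S^*])|+|E(G;S^*,V(G)-(S^*\cup S))|\ \ge\ \binom{t}{2}-|I|.
\]
It therefore suffices to prove $|I|\le \binom{t}{2}-(t^2-2t)/c$.

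To bound $|I|$ I would analyse the domination forced by an internal quasi-edge. If $\{u,v\}$ is a non-adjacent pair of $S^*$ with internal quasi-edge $uw$, then Lemma 3.1 gives $\{u,w\}\succ G-v$, so every vertex of $S^*$ other than $v$ is dominated by $u$ or $w$. As $w$ is adjacent to $u$ but not to $v$, this forces $w$ to be adjacent to every non-neighbour of $u$ in $S^*$ except $v$; writing $d(\cdot)$ for degrees in $G[S^*]$, and noting there are $t-1-d(u)$ such non-neighbours, this yields $d(u)+d(w)\ge t-2$ together with $w\not\sim v$. Mapping each pair of $I$ to the ordered edge $(u,w)$ is injective, because $v$ is the unique vertex not dominated by $\{u,w\}$ and is therefore determined by $(u,w)$; in particular $|I|$ is at most the number of ordered edges of $G[S^*]$ whose endpoints have degree sum at least $t-2$.

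The final and hardest step is the optimisation. Combining the injective count above with the trivial bound $|I|\le N$, and introducing the degree sequence of $G[S^*]$ as variables, produces a constrained extremal problem whose worst case fixes the constant. I expect the main obstacle to be that the two constraints must be used simultaneously: the requirement that the partner $w$ have high degree pulls toward dense, near-complete configurations, whereas the requirement $w\not\sim v$ forbids $w$ from dominating the supplement and rules those configurations out (a complete split graph, for instance, has high degree sums on every edge yet admits no valid internal quasi-edge at all, since the only high-degree candidates for $w$ dominate the whole independent side). Balancing the number of high-degree vertices against the number of low-degree vertices each can legitimately serve as a partner, subject to the avoid-$v$ condition, yields a quadratic whose relevant root is exactly $c=2+2\sqrt2$, the greatest root of $x^2-4x-4=0$; a naive relaxation that drops the avoid-$v$ condition gives only a strictly weaker constant, so the care taken in the preceding step is essential. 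I would close by checking the boundary and small-$t$ cases to secure the stated numerator $t^2-2t$ rather than a bound carrying lower-order slack.
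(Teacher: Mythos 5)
Your setup is sound as far as it goes: the observation that no quasi-edge of a pair in $S^*$ can meet $S$, the resulting dichotomy between internal and cross quasi-edges, the injectivity arguments via uniqueness of the supplement, and the reduction to showing $|I|\le\binom{t}{2}-(t^2-2t)/c$ are all correct. The necessary conditions you extract from an internal quasi-edge ($d(u)+d(w)\ge t-2$ in $G[S^*]$ and $w\not\sim v$) are also right. But the argument stops exactly where the content of the lemma begins: the ``final and hardest step'' is not carried out. You assert that the extremal problem ``yields a quadratic whose relevant root is exactly $c=2+2\sqrt2$'' without stating the extremal problem precisely, solving it, or making it plausible that its optimum is $t^2/c$ rather than something weaker. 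Since the whole point of the lemma is the value of $c$, this cannot be deferred as a routine check; as you note, the relaxation $|I|\le\min\bigl(N,2|E(G[S^*])|\bigr)$ only yields a lower bound of about $\tfrac{1}{3}\binom{t}{2}\approx t^2/6$, strictly short of $t^2/c\approx 0.207\,t^2$. Worse, there is a real risk that your extremal problem is not even true with constant $c$: it uses only necessary conditions visible inside $G[S^*]$ (degree sums along edges and one forbidden common non-neighbour per ordered edge), and nothing in those constraints records the nested structure of the graph that actually forces the constant.

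For comparison, the paper proves the lemma by induction on $|S^*|$. It takes $v$ of minimum degree in $G[S^*]$ and sets $A=N(v)\cap S^*$; then $A=\cap_{s\in S\cup\{v\}}N(s)$ is again a common neighbourhood, so the inductive hypothesis applies to it, while $S^*\setminus(A\cup\{v\})$ consists of non-neighbours of $v$ and is therefore a quasi-clique contributing $\binom{|S^*|-|A|-1}{2}$ further edges and quasi-edges (disjoint from the others because supplements are unique), plus the $|A|$ edges at $v$. Minimising the resulting expression over $|A|$, with the case $|A|\ge\frac{2|S^*|-4}{c}$ handled directly by the minimum-degree bound $|E(G[S^*])|\ge|A||S^*|/2$, is what forces $c$ to satisfy $c^2=4c+4$, i.e.\ $c=2+2\sqrt2$. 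If you want to salvage a non-inductive route, you would at minimum need to identify a nested or self-improving structure playing the role of $A$; a single pass over the degree sequence of $G[S^*]$ is unlikely to produce this constant.
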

\begin{proof}
		We apply induction on size of $S^*$ to prove the theorem.
		Note that for every pair of non-adjacent vertices in $S^*$ such as $\{u,v\}$, If $\{u,w\}$ is the quasi-edge associated to it, then, since $v$ is adjacent $u$, we can conclude that $w\not\in S$.
		Note that when $|S^*| \le 2$, since $\frac{|S^*|^2-2|S^*|}{c} \leq 0$, then the inequality is obviously true. 
		Let $v$ be the vertex having minimum degree in $G[S^*]$. We denote the set of neighbors of $v$ in $S^*$ by $A$.
		Since every vertex in $S^* - (A \cup \{v\})$ is not adjacent to $v$, so $S^* - (A \cup \{v\})$ is a quasi-clique.
		Also $A$ is $\cap_{s\in {S \cup \{v\}}} N(s)$, so $|E(G[A])|+|E(G;A,V(G)-(A\cup S \cup \{v\}))| \geq \frac{|A|^2-2|A|}{c}$.
		For every pair of non-adjacent vertices $\{x,y\}$, one of them is the supplement of quasi-edge associated to this pair, so quasi-edges associated to non-adjacent pairs in $A$ and $S^*-(A\cup\{v\})$ are disjoint. With statements mentioned above we can conclude that:\\ $$|E(G[S^*])|+|E(G;S^*,V(G)-(S^*\cup S))| \geq \frac{|A|^2-2|A|}{c} + {{|S^*| - |A| - 1} \choose 2} + |A|.$$\\
		The right side of the inequality is a function of $|A|$, that we call it $f(|A|)$. One can find out that:\\ $$f'(|A|)=\frac{(c+2)|A|}{c}+(\frac{5}{2}-\frac{2}{c})-|S^*|$$\\ So $f'(|A|)$ has negative value whenever $0 \le |A| \le \frac {2|S^*|-4} {c}$ and $|S^*| \geq 3$. So it suffices to prove that $f(\frac {2|S^*|-4} {c})\ge \frac{|S^*|^2-2|S^*|}{c}$, which is done by Lemma \ref{appendProoflem}.
		On the other hand when $|A| \geq \frac {2|S^*|-4} {c}$ by definition of $A$, we can easily conclude that: $$|E(G[S^*])|\geq\frac{|A||S^*|}{2}\ge \frac{|S^*|^2-2|S^*|}{c}.$$
\end{proof}
\begin{lem}\label{B_qc}
	Let $G=(V,E)$ be a $3_t$-critical graph.
	If $v \in V$, then $V-N_{G}[v]$ is a quasi-clique.
\end{lem}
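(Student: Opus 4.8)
The plan is to unwind the definition of quasi-clique and verify it directly for the set $B := V - N_G[v]$. By definition $B$ consists precisely of those vertices that are neither $v$ nor adjacent to $v$; in particular $v \notin B$. By Definition 3.1 it suffices to show that for every nonadjacent pair $\{x,y\}$ of vertices of $B$ there is an associated quasi-edge, and that every quasi-edge associated with such a pair has an endpoint lying outside $B$. The existence of a quasi-edge is immediate from Lemma 3.1, so the whole content of the lemma reduces to the ``outside $B$'' condition.

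First I would fix such a nonadjacent pair $\{x,y\}$ and apply Lemma 3.1. This yields a quasi-edge $\{a,w\}$, where $a$ is one of $x,y$, the other one (call it $b$) is its supplement, the vertex $w$ is adjacent to exactly one of $a,b$ (namely $a$), and $\{a,w\} \succ G - b$. The key step is to locate $v$ relative to this dominating pair: since $b \in B$ we have $b \neq v$, so $v$ is a vertex of $G - b$ and is therefore either dominated by $\{a,w\}$ or equal to one of its members.

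The crux of the argument is then a short case analysis. We cannot have $v = a$, because $a \in \{x,y\} \subseteq B$ while $v \notin B$. If $v = w$, then the endpoint $w$ already lies in $N_G[v]$, hence outside $B$, and we are finished. Otherwise $v$ is distinct from both $a$ and $w$, so the domination $\{a,w\} \succ G - b$ forces $v$ to have a neighbor in $\{a,w\}$; but $a \in B$ means $a$ is not adjacent to $v$, so $v$ must be adjacent to $w$, whence $w \in N_G(v) \subseteq N_G[v]$ and again $w \notin B$. In every case the quasi-edge $\{a,w\}$ contains the vertex $w$ outside $B$, which is exactly the condition needed for $B$ to be a quasi-clique.

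I do not expect a serious obstacle here, as the conclusion is essentially forced by the domination property supplied by Lemma 3.1. The only point requiring care is that Lemma 3.1 guarantees existence but not uniqueness of the quasi-edge, so the ``outside $B$'' property must be checked for \emph{every} quasi-edge associated with the pair rather than a single chosen one. This causes no difficulty: the argument above uses only the defining relation $\{a,w\} \succ G - b$ together with the fact that both $a$ and $b$ avoid $N_G[v]$, so it applies verbatim to any quasi-edge associated with $\{x,y\}$.
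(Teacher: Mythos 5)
Your argument is correct and is essentially the same one the paper relies on: the paper simply cites the proof from Haynes et al.\ (noting it never uses minimality of $\deg v$), and that proof is exactly your observation that for any quasi-edge $\{a,w\}$ with supplement $b\in B$, the relation $\{a,w\}\succ G-b$ together with $v\notin\{a,b\}$ and $v$ having no neighbor in $B$ forces $w\in N_G[v]$, hence $w\notin B$. Your explicit remark that the ``outside $B$'' condition must hold for \emph{every} associated quasi-edge, not just one, is a point the paper glosses over, and your case analysis handles it cleanly.
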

\begin{proof}
	This lemma is generalized from a lemma in \emph{(\cite{haynes14})}, in which $v$ was assumed as a vertex with minimum degree in $G$. Since the proof was independent of such assumption, the same proof is correct. 
\end{proof}

Now, we present the main result of this paper:
\begin{thm}\label{mainTh}
	Suppose that $c=2+\sqrt2$, and $a$ is the smallest root of the equation $(2c+4)x^2-4cx+c=0$, which is equal to $\frac{\sqrt2-\sqrt{2-\sqrt2}}{2}\approx0.32442$. Let $G(V,E)$ be a $3_t$-critical graph of order $n$, size $m$ and minimum degree $\delta$.
	If $n\ge3$ and $\delta \le an-1$ then,
	$$m>\lceil{\frac{n(n-2)}{4}}\rceil$$
\end{thm}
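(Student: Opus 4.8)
The plan is to fix a vertex $v_\delta$ of minimum degree and partition the vertex set as $V = \{v_\delta\} \cup A \cup B$, where $A = N_G(v_\delta)$ and $B = V - N_G[v_\delta]$, so that $|A| = \delta$ and $|B| = n - \delta - 1$. All edges incident to $v_\delta$ go to $A$, so the edge count splits exactly as $m = \delta + |E(G[A])| + e(A,B) + |E(G[B])|$, where $e(A,B)$ is the number of $A$--$B$ edges. I would bound the last three terms from below by two structural inputs and then reduce the theorem to a single quadratic inequality in $\delta$.

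For the set $B$: by Lemma \ref{B_qc} it is a quasi-clique. Every nonadjacent pair of $B$ carries a quasi-edge, and since neither endpoint of such a pair is adjacent to $v_\delta$, the associated quasi-edge has one end in $A$; as distinct nonadjacent pairs have quasi-edges associated with distinct pairs (Lemma 3.1), these account for $\binom{|B|}{2} - |E(G[B])|$ distinct $A$--$B$ edges, giving $|E(G[B])| + |E(G;B,A)| \ge \binom{|B|}{2}$. For the set $A$: applying Lemma \ref{shimi} to $S = \{v_\delta\}$, for which $S^* = A$ and $V - (S^* \cup S) = B$, yields $|E(G[A])| + |E(G;A,B)| \ge \frac{\delta^2 - 2\delta}{c}$.

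These two bounds add without double counting. Each quasi-edge has a unique supplement (Lemma 3.1), and the supplement is one of the two vertices of the pair it is associated with; hence a quasi-edge of a pair contained in $A$ has its supplement in $A$, while a quasi-edge of a pair contained in $B$ has its supplement in $B$. Since the supplement is determined by the edge, no single edge can play both roles, so $E(G;A,B)$ and $E(G;B,A)$ are disjoint subsets of the $A$--$B$ edges and $e(A,B) \ge |E(G;A,B)| + |E(G;B,A)|$. Summing the two displayed inequalities therefore gives
$$m \ge \delta + \frac{\delta^2 - 2\delta}{c} + \binom{n-\delta-1}{2} =: L(\delta).$$

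It remains to show $L(\delta) > \lceil n(n-2)/4 \rceil$ for $0 \le \delta \le an - 1$. Expanding $L(\delta) - \tfrac14 n(n-2)$ produces an upward-opening quadratic in $\delta$; writing $\delta = xn$, its order-$n^2$ part is $n^2\bigl(\tfrac{c+2}{2c}x^2 - x + \tfrac14\bigr)$, whose roots are governed exactly by $(2c+4)x^2 - 4cx + c = 0$. The smaller root of this equation is precisely $a$, so $a$ is the exact leading-order threshold; moreover the vertex of the quadratic in $\delta$ lies near $\tfrac{c}{c+2}n$, far to the right of $an - 1$, so $L(\delta) - \tfrac14 n(n-2)$ is decreasing on $[0, an-1]$ and its minimum over the interval is attained at the endpoint $\delta = an - 1$. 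I expect the two delicate points to be, first, the disjointness bookkeeping that lets the two lower bounds be added, and, second, the finite-$n$ endpoint estimate: one must keep the subleading terms of $L(\delta)$ and handle the ceiling, which is tightest for odd $n$, where $n(n-2)/4 = k^2 - \tfrac14$ and one must gain an extra $\tfrac14$ to force the integer $m$ strictly past $\lceil n(n-2)/4 \rceil = k^2$.
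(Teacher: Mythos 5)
Your proposal follows essentially the same route as the paper: the same decomposition around a minimum-degree vertex into $A=N_G(v)$ and $B=V-N_G[v]$, the same use of Lemma \ref{B_qc} and Lemma \ref{shimi}, the same unique-supplement argument to add the two bounds without double counting, and the same reduction to the endpoint inequality at $\delta=an-1$ (which the paper carries out in Lemma \ref{appendProofTheorem}). The delicate points you flag --- the disjointness bookkeeping and the need to beat $\tfrac{n(n-2)+1}{4}$ for odd $n$ --- are exactly the ones the paper handles, so the plan is sound.
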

\begin{proof}
	First, note that for every positive integer $n$:
	\begin{itemize}
		\item if $n$ is even ${n(n-2)}$ is divisible by $4$.
		\item if $n$ is odd ${n(n-2)+1}$ is divisible by $4$.
	\end{itemize}
	So it suffices to prove that:
	$$m>\frac{n(n-2)+1}{4}$$
		Let $v \in V(G)$ be a vertex with $\delta$ neighbors and $A=N_G(v)$. َAlso let $B=V-N_G[v]$, then by Lemma \ref{B_qc}, $B$ is a quasi-clique. Also by Lemma \ref{shimi}, $|E(G[A])|+|E(G;A,B)|\ge \frac{\delta^2-2\delta}{c}.$ 
		$A$ and $B$ are disjoint, so the quasi-edges associated to non-adjacent pairs in $A$ are disjoint from the quasi-edges associated to non-adjacent pairs in $B$, because every quasi-edge has unique supplement. Therefore, we have:
		$$m\ge\delta+\frac{\delta^2-2\delta}{c}+{n-1-\delta\choose2}$$So by Lemma \ref{appendProofTheorem} we have: $$m>\frac{n(n-2)+1}{4}$$\\
		
	\end{proof}
	\begin{thm}
		For every diameter-2-critical graph $G$ of order $n$ and size $m$, if $\Delta(G)\ge0.6756n$, then $m<\lfloor{\frac{n^2}{4}}\rfloor$
	\end{thm}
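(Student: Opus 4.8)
The plan is to pass to the complement $\overline{G}$ and reduce the statement to the results already proved. Writing $\overline{m} = \binom{n}{2} - m$ for the size of $\overline{G}$, a short computation separating the two parities of $n$ shows that $\binom{n}{2} - \lfloor n^2/4\rfloor = \lceil n(n-2)/4\rceil$. Hence the conclusion $m < \lfloor n^2/4\rfloor$ is exactly equivalent to $\overline{m} > \lceil n(n-2)/4\rceil$, and it therefore suffices to bound the number of edges of $\overline{G}$ from below.

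Next I would translate the degree hypothesis. Since $\deg_{\overline{G}}(x) = n-1-\deg_{G}(x)$ for every vertex $x$, we have $\delta(\overline{G}) = n-1-\Delta(G)$, so $\Delta(G) \ge 0.6756n$ gives $\delta(\overline{G}) \le 0.3244n - 1$. Because $0.3244 < a \approx 0.32442$, this yields $\delta(\overline{G}) \le an - 1$; the very tight margin here is precisely why the constant $0.6756$, rather than something rounder, appears in the statement, and it is the single numerical check on which the threshold turns.

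By Theorem \ref{complement}, $\overline{G}$ is either $3_t$-critical or $4_t$-supercritical, and I would split into these two cases. If $\overline{G}$ is $3_t$-critical, then since $n \ge 3$ (a graph of diameter two has at least three vertices) and $\delta(\overline{G}) \le an - 1$, Theorem \ref{mainTh} applies verbatim and gives $\overline{m} > \lceil n(n-2)/4\rceil$, which is exactly what we want. If instead $\overline{G}$ is $4_t$-supercritical, then by Theorem \ref{4t} it is a disjoint union $K_p \sqcup K_q$ of two nontrivial complete graphs with $p+q=n$ and $p,q\ge 2$, so $G$ itself is the complete bipartite graph $K_{p,q}$ with $m = pq$ and $\Delta(G) = \max(p,q)$.

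The only genuinely delicate case is this last one, because the balanced complete bipartite graph is itself the extremal configuration of the conjecture, where equality $m = \lfloor n^2/4\rfloor$ is attained; it is the maximum-degree hypothesis that must be used to exclude it. Concretely, $\max(p,q) \ge 0.6756n$ forces $\min(p,q) \le 0.3244n$, and since $pq = \min(p,q)(n - \min(p,q))$ is increasing in $\min(p,q)$ on $[0, n/2]$, I would conclude $pq \le 0.3244n \cdot 0.6756n < n^2/4$; as $m$ is an integer, this yields the strict bound $m < \lfloor n^2/4\rfloor$ and completes the argument.
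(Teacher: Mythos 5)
Your proposal is correct and follows essentially the same route as the paper: pass to the complement, observe that $\binom{n}{2}-\lfloor n^2/4\rfloor=\lceil n(n-2)/4\rceil$, use Theorem \ref{complement} to split into the $3_t$-critical case (handled by Theorem \ref{mainTh}, since $\delta(\overline{G})=n-1-\Delta(G)\le 0.3244n-1\le an-1$) and the $4_t$-supercritical case (where Theorem \ref{4t} forces $G=K_{p,q}$ with $\min(p,q)\le 0.3244n$). One small caveat in the last step: for odd $n$ the implication ``$pq<n^2/4$ and $pq\in\mathbb{Z}$ imply $pq<\lfloor n^2/4\rfloor$'' does not hold, but your sharper numerical bound $pq\le 0.3244\cdot 0.6756\,n^2\approx 0.2192\,n^2<\lfloor n^2/4\rfloor$ for $n\ge 3$ already gives the strict inequality directly.
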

	\begin{proof}
		Since $diam(G)=2$, so $n\ge3.$ 
		Let $\bar{G}$ be complement of $G$. Assume that size of $\bar{G}$ is $m'$. Since $m+m'={n\choose2}$, so it suffices to prove that: 
		$$m'>\lceil\frac{n(n-2)}{4}\rceil.$$ 
		We have: $$\delta(\bar{G})=n-1-\Delta(G)\le0.3244n-1$$		
		Note that by Theorem \ref{complement}, $\bar{G}$ is either $3_t$-critical or $4_t$-supercritical. If $\bar{G}$ is $4_t$-supercritical, then by Theorem \ref{4t}, $\bar{G}$ is disjoint union of two non-trivial graph and size of the smaller one is less than $0.3244n-1$, which means $$m'\ge{0.3244n-1\choose2}+{0.6756n+1\choose2}>\lceil\frac{n(n-2)}{4}\rceil.$$ So we may consider that $\bar{G}$ is $3_t$-critical, which is shown in Theorem \ref{mainTh}.
		\end{proof}

	\appendix
	\section{\\Proof of Inequalities} \label{App:AppendixA}
\begin{lem}\label{appendProofTheorem}
	Suppose that $c=2+\sqrt2$, and $a$ is smaller root of the equation $(2c+4)x^2-4cx+c=0$, which is eqaul to $\frac{\sqrt2-\sqrt{2-\sqrt2}}{2}\approx0.3244$. \\ If  $an-1\ge y\ge0$ and $n\ge3$ then:\\ 
	$$y+{\frac{y^2-2y}{c}} +   {n-1-y \choose 2} > \frac{n(n-2)+1}{4} $$
		
\end{lem}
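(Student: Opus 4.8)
The plan is to regard the left side minus the right side as a single function of $y$ with $n$ held fixed, reading $\binom{n-1-y}{2}$ as $\tfrac{(n-1-y)(n-2-y)}{2}$, and to minimize it over the admissible range $0\le y\le an-1$. Set
\[
f(y)=y+\frac{y^2-2y}{c}+\frac{(n-1-y)(n-2-y)}{2}-\frac{n(n-2)+1}{4}.
\]
Expanding, $f$ is a quadratic in $y$ with leading coefficient $\frac{c+2}{2c}>0$, hence convex; its linear coefficient is $\frac52-\frac2c-n$, and the two constant contributions combine to $\frac{(n-2)^2-1}{4}$. Proving the lemma amounts to showing $f>0$ on the whole interval, which for a convex function reduces to pinning down the value at whichever endpoint is nearer the vertex.

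First I would clear away the degenerate case. Since $a\approx0.3244$, at $n=3$ we have $an-1<0$, so the range $an-1\ge y\ge0$ is empty and the claim holds vacuously; thus I may assume $n\ge4$. I then compute the vertex $y^\ast=\frac{cn-\frac{5c}{2}+2}{c+2}$ and check that $y^\ast\ge an-1$ for all $n\ge4$: the coefficient of $n$ in $y^\ast$ is $\frac{c}{c+2}$, which exceeds $a$, and the inequality already holds at $n=4$, so it persists. Consequently $f$ is nonincreasing on $[0,an-1]$ and its minimum there is $f(an-1)$, so it suffices to prove $f(an-1)>0$.

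The crucial point is that $a$ is defined precisely so as to annihilate the top-order term of $f(an-1)$. Substituting $y=an-1$ and collecting powers of $n$, the coefficient of $n^2$ is $\frac{c+2}{2c}a^2-a+\frac14=\frac{1}{4c}\bigl((2c+4)a^2-4ca+c\bigr)$, which vanishes because $a$ is a root of $(2c+4)x^2-4cx+c$. Hence $f(an-1)$ is an affine function of $n$; I would then verify that its slope, which simplifies to $a\bigl(\frac32-\frac4c\bigr)$, is strictly positive (this needs only $\frac4c<\frac32$, i.e.\ $c>\frac83$, which holds) and that its value at the least admissible size $n=4$ is already positive. An affine function with positive slope that is positive at $n=4$ stays positive for every $n\ge4$, giving $f(an-1)>0$ and therefore $f(y)>0$ throughout $[0,an-1]$.

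The main obstacle is the boundary bookkeeping rather than any single hard idea: since the $n^2$-term is cancelled by design, the whole conclusion rides on the comparatively small affine remainder, so the argument is only as strong as the check at the smallest admissible integer. Two things must be handled with care --- that $n=3$ is saved solely by vacuity of the $y$-range, and that the vertex genuinely lies to the right of $an-1$ for every $n\ge4$, so that $y=an-1$ is really where the minimum of $f$ is attained.
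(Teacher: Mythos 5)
Your proof is correct and follows essentially the same route as the paper: both reduce the problem to the right endpoint $y=an-1$ by showing the quadratic in $y$ is decreasing on $[0,an-1]$ (you via convexity and the vertex location, the paper via the sign of $f'$), and both rest on the fact that the defining equation for $a$ kills the $n^2$ term of $f(an-1)$, leaving an affine function of $n$ with positive slope to be verified at the smallest case. The only differences are cosmetic --- you dispose of $n=3$ by vacuity of the $y$-range and anchor the check at $n=4$, while the paper evaluates $g(3)\approx 0.025>0$ directly --- and your numerical claims (that $a\approx 0.3244$ makes the range empty at $n=3$, that the slope $a\bigl(\tfrac{3}{2}-\tfrac{4}{c}\bigr)$ is positive, and that the value at $n=4$ is positive) all check out once one reads the statement's ``$c=2+\sqrt2$'' as the evident typo for $c=2+2\sqrt2$, the constant of Lemma 3.3, which is the value consistent with $a=\frac{\sqrt2-\sqrt{2-\sqrt2}}{2}\approx 0.3244$.
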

\begin{proof}
		Let $f(y)= y+{\frac{y^2-2y}{c}} +   {n-1-y \choose 2}  $. We have:$$f'(y)=1+\frac{2y-2}{c}-n+y+\frac{3}{2}$$
		$$=-n+\frac{5}{2}+\sqrt2y-\frac{2}{c}$$
		$$<-n+\frac{5}{2}+\sqrt2(an-1)-\frac{2}{c}<0
		$$
		Which means $f(y)\ge f(an-1)$. Let $g(n)=f(an-1)-\frac{n(n-2)+1}{4}$. Now it suffices to prove $g(n)$ has positive value for every $n\ge3$. $$g(n)=\frac{1}{4}((-8+7\sqrt2+4\sqrt{4-2\sqrt2}-7\sqrt{2-2\sqrt2})n+6\sqrt2-11)$$
		So the coefficient of $n$ is positive and $g(3)\approx0.025>0$, so we can conclude that $g(n)$ is positive when $n\ge3$. 
\end{proof}

\begin{lem}\label{appendProoflem}
	Let $n\ge3$  be a positive integer and $c=2+2\sqrt2$, then $$\frac{(\frac{2n-4}{c})^2-2(\frac{2n-4}{c})}{c}+{n-(\frac{2n-4}{c})-1\choose2}+(\frac{2n-4}{c})\ge\frac{n^2-2n}{c}.$$
\end{lem}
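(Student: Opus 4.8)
The plan is to reduce the whole inequality to a single function of $n$ and then exploit the defining relation of $c$. First I would set $x=\frac{2n-4}{c}$ and let $h(n)$ denote the left-hand side minus the right-hand side, so the claim becomes $h(n)\ge 0$ for every integer $n\ge 3$. Since $x$ is affine in $n$, each of the three summands is a polynomial in $n$ of degree at most two, including $\binom{n-x-1}{2}=\frac{(n-x-1)(n-x-2)}{2}$; hence $h$ is a priori a quadratic $h(n)=An^2+Bn+C$ whose three coefficients are explicit expressions in $c$.

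The key step is to compute these coefficients symbolically and simplify them using the fact that $c=2+2\sqrt2$ is the positive root of $x^2-4x-4=0$, i.e. $c^2=4c+4$ (and consequently $c^3=20c+16$). I expect the leading coefficient to vanish: collecting the $n^2$-terms gives $A=\frac{4}{c^3}+\frac{(c-2)^2}{2c^2}-\frac1c$, and multiplying by $2c^3$ yields $c^3-6c^2+4c+8$, which vanishes after substituting $c^2=4c+4$. This is precisely why $c$ was chosen as it was in Lemma \ref{shimi}: it forces the quadratic terms to match to leading order, so that $h$ is in fact affine, $h(n)=Bn+C$.

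Once $A=0$, the remaining steps are to check that the slope is positive and that the value at the left endpoint is positive. Carrying out the same reduction for the linear coefficient, I expect $B$ to collapse to $\frac{2(c+2)}{c^3}>0$, so that $h$ is increasing and it suffices to verify $h(3)\ge 0$. Evaluating at $n=3$ with $x=\frac2c$ gives $h(3)=\frac{4}{c^3}-\frac{2}{c^2}-\frac4c+1$, which after multiplying by $c^3$ and reducing with $c^2=4c+4$ becomes $\frac{2(c+2)}{c^3}>0$. Therefore $h(n)\ge h(3)>0$ for every $n\ge3$, which is the claim.

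The main obstacle is purely computational: one must expand the three summands carefully with the irrational constant $c=2+2\sqrt2$ sitting in every denominator, and then repeatedly apply the minimal polynomial $c^2=4c+4$ to collapse the powers $c^2$ and $c^3$ until the coefficients reduce to the clean closed forms above. The structural insight that keeps the argument short---rather than resorting to a brute quadratic-positivity check via the discriminant---is that the $n^2$-coefficient is forced to be $0$ by the choice of $c$, so the problem reduces to a positive slope together with a single endpoint evaluation.
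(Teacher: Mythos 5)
Your proposal is correct and matches the paper's proof in essence: both reduce the difference of the two sides to an affine function of $n$ (the quadratic coefficient vanishes precisely because $c^2=4c+4$) and then verify positivity for $n\ge3$. Your closed forms check out --- indeed $B=\frac{2(c+2)}{c^3}=\frac{2\sqrt2}{c^2}=\frac{3\sqrt2-4}{2}$, so your slope-plus-endpoint argument is just a repackaging of the paper's explicit identity $f(n)=\frac{3\sqrt2-4}{2}(n-2)$.
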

\begin{proof}
	We prove that $f(n)=\frac{(\frac{2n-4}{c})^2-2(\frac{2n-4}{c})}{c}+{n-(\frac{2n-4}{c})-1\choose2}+(\frac{2n-4}{c})-\frac{n^2-2n}{c}$ \\\\
	has positive value. $$f(n)=\frac{1}{2}((3\sqrt{2}-4)n+(8-6\sqrt{2})$$$$=\frac{(3\sqrt{2}-4)}{2}(n-2)>0$$
	 So $f(n)$ is positive for $n\ge3$.  
\end{proof}
	\appendix
\end{document}